\let\@@pmod\pmod
\DeclareRobustCommand{\pmod}{\@ifstar\@pmods\@@pmod}
\def\@pmods#1{\mkern4mu({\operator@font mod}\mkern 6mu#1)}
\newtheorem{theorem}{Theorem}
\newtheorem{lemma}[theorem]{Lemma}
\theoremstyle{definition}
\newtheorem{definition}[theorem]{Definition}
\newtheorem{remark}[theorem]{Remark}
\newtheorem{remarks}[theorem]{Remarks}
\newtheorem{example}[theorem]{Example}
\newtheorem{algorithm}[theorem]{Algorithm}
\numberwithin{theorem}{section}
\numberwithin{equation}{section}
\renewcommand{\o}{\mathfrak{o}}
\newcommand{\p}{\mathfrak{p}}
\providecommand{\C}{\mathbb{C}}
\renewcommand{\C}{\mathbb{C}}
\newcommand{\Z}{\mathbb{Z}}
\newcommand{\Q}{\mathbb{Q}}
\newcommand{\R}{\mathbb{R}}
\newcommand{\pmin}{p_{\text{min}}}
\newcommand{\pmax}{p_{\text{max}}}
\newcommand{\dmax}{d_{\text{max}}}
\newcommand{\zmax}{z_{\text{max}}}
\newcommand{\rhod}{\rho_{\text{div}}}
\newcommand{\rhos}{\rho_{\text{sol}}}
\newcommand{\rhoz}{\rho_{\text{ap}}}
\newcommand{\pid}{\pi_{\text{div}}}
\newcommand{\piz}{\pi_{\text{ap}}}
\newcommand{\CR}{\mathcal{C}}
\newcommand{\AR}{\mathcal{A}}
\newcommand{\ZR}{\mathcal{Z}}
\newcommand{\SR}{\mathcal{S}}
\DeclareMathOperator{\Disc}{Disc}
\DeclareMathOperator{\ord}{ord}
\DeclareMathOperator{\vol}{vol}
\DeclareMathOperator{\sgn}{sgn}
\pgfplotsset{compat=1.16}
\begin{document}
\title{On a question of Mordell}
\author{Andrew R. Booker}
\address{School of Mathematics, University of Bristol, Woodland Road, Bristol, BS8 1UG, UK}
\email{andrew.booker@bristol.ac.uk}

\author{Andrew V. Sutherland}
\address{Department of Mathematics, Massachusetts Institute of Technology, 77 Mass.\ Ave., Cambridge, MA \ 02139, USA}
\email{drew@math.mit.edu}

\begin{abstract}
We make several improvements to methods for finding
integer solutions to $x^3+y^3+z^3=k$ for small values of $k$.
We implemented these improvements on Charity Engine's global compute grid of
500,000 volunteer PCs and found new representations for
several values of $k$, including 3 and 42. This completes
the search begun by Miller and Woollett in 1954 and resolves a
challenge posed by Mordell in 1953.
\end{abstract}
\maketitle
\vspace{0pt}
\begin{center}
\parbox[c]{250pt}{\hspace{-3pt}\scriptsize{\textit{``I think the problem, to be quite honest with you, is that you've never actually known what the question is.'' -- Deep Thought}}}
\end{center}
\bigskip

\section{Introduction}
Let $k$ be an integer with $k\not\equiv\pm4\pmod*{9}$. Heath-Brown
\cite{Heath-Brown} has conjectured that there are infinitely many
triples $(x,y,z)\in\Z^3$ such that
\begin{equation}\label{eq:threecubes}
x^3+y^3+z^3=k.
\end{equation}
Interest in this Diophantine equation goes back at least to Mordell
\cite{Mordell}, who asked whether there are any solutions to
\eqref{eq:threecubes} for $k=3$ other than permutations of $(1,1,1)$ and
$(4,4,-5)$. The following year, Miller and Woollett \cite{MW} used the
\texttt{EDSAC} at Cambridge to run the first in a long line of computer
searches attempting to answer Mordell's question, and also expanded the
search to all positive $k\le 100$.

In this paper we build on the approach of the first author in
\cite{Booker}, and find the following new solutions
to \eqref{eq:threecubes}:
\begingroup\makeatletter\def\f@size{10}\check@mathfonts
\def\maketag@@@#1{\hbox{\m@th\large\normalfont#1}}%
\begin{align*}
569936821221962380720^3+(-569936821113563493509)^3+(-472715493453327032)^3&=3,\\
(-80538738812075974)^3+80435758145817515^3+12602123297335631^3&=42,\\
(-385495523231271884)^3+383344975542639445^3+98422560467622814^3&=165,\\
143075750505019222645^3+(-143070303858622169975)^3+(-6941531883806363291)^3&=579,\\
(-74924259395610397)^3+72054089679353378^3+35961979615356503^3&=906.
\end{align*}\endgroup
In particular, we answer Mordell's question and complete Miller and
Woollett's search by finding at least one solution to \eqref{eq:threecubes} for
all $k\le 100$ for which there are no local obstructions.

The algorithm used in \cite{Booker} is a refinement of an approach originally
suggested in \cite{HBLtR}, which is based on the following observation.
Let us first assume that $|x|>|y|>|z|$ and define
\[
d\coloneqq |x+y|.
\]
Then $d$ is nonzero, and the solutions
to \eqref{eq:threecubes} are precisely the triples $(x,y,z)$ for which $z$
is a cube root of $k$ modulo $d$ and the integer
\begin{equation}\label{eq:sdz}
\Delta(d,z)\coloneqq 3d(4|k-z^3|-d^3)
\quad\text{is a perfect square}.
\end{equation}
Solutions that do not satisfy $|x|>|y|>|z|$ can
be efficiently found by other means: after a suitable permutation
either $x=-y$ and $z$ is a cube root of~$k$, or $y=z$ and we seek a 
solution to the Thue equation $x^3+2y^3=k$, which can be easily handled.
If $|x|>|y|>|z|$ and we also assume $|z|>\sqrt{k}$ then we must have $0<d<\alpha|z|$,
where $\alpha=\sqrt[3]{2}-1\approx 0.25992$; see \cite[\S2]{Booker} for
details. Solutions with $|z|\le\sqrt{k}$
are easily found by solving $x^3+y^3=k-z^3$ for each fixed $z$ with $|z|\le\sqrt{k}$.

This leads to an algorithm that searches for solutions with $|z|\le B$ by
enumerating positive integers $d\le\alpha B$, and for each such $d$, determining
the residue classes of all cube roots of $k$ modulo $d$ and searching the corresponding
arithmetic progressions for values of $z\in[-B,B]$ that make
$\Delta(d,z)$ a square.
With suitable optimizations, including sieving arithmetic progressions to quickly rule out
integers that are not squares modulo primes in a suitably chosen set, this leads to
an algorithm that requires only $O\bigl(B(\log\log B)(\log\log\log B)\bigr)$
operations on integers
in $[0,B]$ for any fixed value of $k$.  An attractive feature of this algorithm
is that it finds all solutions with $\min\{|x|,|y|,|z|\}\le B$, even those for which
$\max\{|x|,|y|,|z|\}$ may be much larger than $B$ (note that this is the case in our
solution for $k=3$).

This algorithm was used in \cite{Booker} to find solutions for $k=33$ and $k=795$,
leaving only the following eleven $k\le 1000$ unresolved:
\begin{equation}\label{eq:openk}
42,\ 114,\ 165,\ 390,\ 579,\ 627,\ 633,\ 732,\ 906,\ 921,\ 975.
\end{equation}
The search in \cite{Booker} also ruled out any solutions for these $k$ with $\min\{|x|,|y|,|z|\}\le 10^{16}$.

Here we make several improvements to this method in \cite{Booker} that allow
us to find a new solution for $k=3$ as well as four of the outstanding $k$ listed above.
\begin{itemize}
\setlength\itemsep{4pt}
\item Instead of a single parameter $B$ bounding $|z|\le B$ and $0< d \le \alpha B$,
we use independent bounds $\dmax$ on $d$ and $\zmax$ on $|z|$, whose ratio we optimize
via an analysis of the expected distribution of $|z|/d$; this typically leads to a $\zmax/\dmax$ ratio
that is 10 to 20 times larger than the ratio $1/\alpha\approx 3.847332$ used in \cite{Booker}.
\item Rather than explicitly representing a potentially large set of sieved arithmetic progressions containing candidate values of $z$ for a given $d$, we implicitly represent them as intersections of arithmetic progressions modulo the prime power factors of $d$ and auxiliary primes.
This both improves the running time and reduces the memory footprint of the algorithm, allowing for much larger values of $|z|$.
\item We dynamically optimize the choice of auxiliary primes used for sieving based on the values of $k$ and $d$; when $d$ is much smaller than $\zmax$ this can reduce the number of candidate values of $z$ by several orders of magnitude.
\item We exploit $3$-adic and cubic reciprocity constraints for all
$k\equiv\pm3\pmod*{9}$; for the values of $k$ listed in~\eqref{eq:openk} this
reduces the average number of $z$ we need to check for a given value
of~$d$ by a factor of between 2 and 4 compared to the congruence
constraints used in \cite{Booker}, which did not use cubic reciprocity
for $k\ne 3$.
\end{itemize}
Along the way we compute to high precision the expected
density of solutions to \eqref{eq:threecubes} conjectured by
Heath-Brown \cite{Heath-Brown}, and compare it with the numerical
data compiled by Huisman \cite{Huisman} for $k\in[3,1000]$ and
$\max\{|x|,|y|,|z|\}\le10^{15}$. The data strongly support Heath-Brown's
conjecture that \eqref{eq:threecubes} has infinitely many solutions for
all $k\not\equiv\pm4\pmod*{9}$.

\subsection*{Acknowledgments}
We are extremely grateful to Charity Engine for providing the
computational resources used for this project, and in particular to
Mark McAndrew, Matthew Blumberg, and Rytis Slatkevi\v{c}ius, who were
responsible for running these computations on the Charity Engine compute
grid.

We thank Roger Heath-Brown for several stimulating discussions;
in particular, his conversation with Booker in the Nettle and Rye
on 27 February 2019 informed the analysis presented in
Section~\ref{sec:heuristics}.

Sutherland also acknowledges the support of the Simons Foundation
(award 550033).

\section{Density computations}
In this section we study Heath-Brown's conjecture in detail. In
particular, we explain how to compute the conjectured density of solutions
to high precision and compare the results with available numerical data. We further study the
densities of divisors $d\mid z^3-k$ and arithmetic progressions $z\pmod*{d}$
that occur in our algorithm, which informs the choice of
parameters used in our computations.

Let $k$ be a cubefree integer with $k\ge3$ and $k\not\equiv\pm4\pmod9$.
Define $K=\Q(\sqrt[3]{k})$ and $F=\Q(\sqrt{-3})$, and let $\o_K$ and
$\o_F$ be their respective rings of integers. We have
$\o_F=\Z[\zeta_6]$, where $\zeta_6=\frac{1+\sqrt{-3}}2$ is a
generator of the unit group $\o_F^\times$. Also, $\Disc(F)=-3$
and $\Disc(K)=-3f^2$, where, by \cite[Lemma~2.1]{Harron},
$$
f=\biggl(\prod_{p\mid k}p\biggr)\cdot
\begin{cases}
1&\text{if }k\equiv\pm1\pmod*{9},\\
3&\text{otherwise}.
\end{cases}
$$

We define two modular forms related to $F$ and $K$. First, let
$f_1$ be the modular form of weight $1$ and level $|\Disc(K)|$
such that $\zeta_K(s)=\zeta(s)L(s,f_1)$.
It follows from the ramification description in \cite[\S2.1]{Harron} that rational primes $p$ decompose into prime ideals
of $\o_K$ as follows (subscripts denote inertia degrees):
\[
p\o_K = \begin{cases}
\p_1\p_2 & \text{if $p\equiv 2\pmod*{3}$ and $p\nmid k$},\\
\p_1\p_1'\p_1'' & \text{if $p\equiv 1\pmod*{3}$ and $p\nmid k$ and $k$ is a cube modulo $p$},\\
\p_3 & \text{if $p\equiv 1\pmod*{3}$ and $p\nmid k$ and $k$ is not a cube modulo $p$},\\
\p_1^2\p_1' & \text{if $p=3$ and $k\equiv \pm 1\pmod*{9}$},\\
\p_1^3 & \text{otherwise}.
\end{cases}
\]
From this data we find that the local Euler factor of $L(s,f_1)$ at $p$ is
$$
L_p(s,f_1)=\frac1{1-c_p(k)p^{-s}+\left(\frac{\Disc(K)}{p}\right)p^{-2s}},
$$
where
$$
c_p(k)\coloneqq\begin{cases}
2&\text{if }p\nmid k,\,p\equiv1\pmod*{3}\text{ and }k^{(p-1)/3}\equiv1\pmod*{p},\\
-1&\text{if }p\nmid k,\,p\equiv1\pmod*{3}\text{ and }k^{(p-1)/3}\not\equiv1\pmod*{p},\\
1&\text{if }p=3\text{ and }k\equiv\pm1\pmod*{9},\\
0&\text{otherwise}.
\end{cases}
$$

Now let $\sigma:F\to\C$ be the unique embedding for which we have
$\Im\sigma(\sqrt{-3})>0$. Let
$\chi_f:(\o_F/3\o_F)^\times\to\C^\times$ be the character defined by
$\chi_f(\zeta_6+3\o_F)=\sigma(\zeta_6^{-1})$, and define
$\chi_\infty(z)\coloneqq\sigma(z)/|\sigma(z)|$.
Let $\chi$ be the Gr\"{o}ssencharakter of $F$ defined by
$$
\chi(\alpha\o_F)\coloneqq\begin{cases}
\chi_\infty(\alpha)\chi_f(\alpha+3\o_F)
&\text{if }\alpha\in\o_F\setminus\sqrt{-3}\o_F,\\
0&\text{if }\alpha\in\sqrt{-3}\o_F.
\end{cases}
$$
By automorphic induction, there is a holomorphic newform
$f_2$ of weight $2$ and level $|\Disc(F)|N(3\o_F)=27$ such that
$L(s,f_2)=L(s-\frac12,\chi)$.

Given a prime $p\equiv1\pmod*{3}$, let $a_p$ denote the unique integer
for which $a_p\equiv1\pmod*{3}$ and $4p=a_p^2+27b^2$ for some
$b\in\Z_{>0}$.
Let $\alpha=\frac{a_p+3b\sqrt{-3}}{2}$ and $\p=\alpha\o_F$, so that
$p\o_F=\p\overline{\p}$. We have
$$
\alpha+1=\frac{a_p+2+3b\sqrt{-3}}{2}
=3\left(\frac{\frac{a_p+2}{3}+b\sqrt{-3}}{2}\right)\in3\o_F,
$$
so $\chi_f(\alpha)=\chi_f(-1)=-1$. Thus,
$\chi(\p)=-\frac{\sigma(\alpha)}{\sqrt{p}}$ and
$\chi(\overline{\p})=-\frac{\overline{\sigma(\alpha)}}{\sqrt{p}}$,
so the Euler factor of $L(s,f_2)$ at $p$ (in its arithmetic normalization) is
$$
\frac1{(1-\chi(\p)p^{\frac12-s})(1-\chi(\overline{\p})p^{\frac12-s})}
=\frac1{1+a_pp^{-s}+p^{1-2s}}.
$$
For a prime $p\equiv2\pmod*3$, we have
$\chi(p\o_F)=\chi_\infty(p)\chi_f(p)=\chi_f(-1)=-1$, so the corresponding
Euler factor is
$$
\frac1{1-\chi(p\o_F)N(p\o_F)^{\frac12-s}}=\frac1{1+p^{1-2s}}.
$$
Finally, $\chi(\sqrt{-3})=0$, so the Euler factor at $p=3$ is $1$.

In summary, if we extend the definition of $a_p$ so that $a_p=0$
for $p\not\equiv1\pmod*{3}$, then the Euler factor of $L(s,f_2)$ at $p$
is
$$
L_p(s,f_2)=\frac1{1+a_pp^{-s}+\left(\frac{9}{p}\right)p^{1-2s}}.
$$

\subsection{Solution density}\label{sec:solutiondensity}
Define
$$
\sigma_p\coloneqq\lim_{e\to\infty}
\frac{\#\{(x,y,z)\pmod*{p^e}:x^3+y^3+z^3\equiv k\pmod*{p^e}\}}{p^{2e}}.
$$
Then, as calculated by Heath-Brown \cite{Heath-Brown}, we have
$$
\sigma_p=\begin{cases}
1+\frac{3c_p(k)}{p}-\frac{a_p}{p^2}
&\text{if }p\nmid 3k,\\
1+\frac{(p-1)a_p-1}{p^2}
&\text{if }p\mid k\text{ and }p\ne3,\\
\frac13\#\{(x,y,z)\pmod*{3}:x^3+y^3+z^3\equiv k\pmod*{9}\}&\text{if }p=3.
\end{cases}
$$

Now let $h\colon \R^3\to\R_{\ge0}$ be a height function, by which we
mean a function that is continuous,
symmetric in its inputs, and satisfies
\begin{itemize}
\item $h(x,y,z)>0$ when $x^3+y^3+z^3=0$ and $xyz\ne0$,
\item $h(\lambda x,\lambda y,\lambda z)=|\lambda|h(x,y,z)$
for any $\lambda\in\R^\times$.
\end{itemize}
The real density of solutions to $x^3+y^3+z^3=0$ with height in the interval $\mathcal H\coloneqq [H_1,H_2]$
can then be computed as follows.

For $\varepsilon > 0$ define
\begin{align*}
S(\varepsilon) &\coloneqq \left\{(x,y,z)\in\R^3: h(x,y,z)\in\mathcal H,\,|x^3+y^3+z^3|\le\varepsilon\right\},\\
T(\varepsilon) &\coloneqq \left\{(x,y,z)\in\R^3:x\ge y\ge z\ge0,\,h(x,-y,-z)\in\mathcal H,\,|x^3-y^3-z^3|\le\varepsilon\right\},
\end{align*}
so that $\vol(S(\varepsilon))/\vol(T(\varepsilon))\to12$ as
$\varepsilon\to0^+$.  We may then compute

\begin{equation}\label{eq:realdensity}
\begin{aligned}
\lim_{\varepsilon\to0^+}
(2\varepsilon)^{-1}\vol(S(\varepsilon)) &= 12\lim_{\varepsilon\to0^+}(2\varepsilon)^{-1}\vol(T(\varepsilon))\\
&=12\int_0^\infty\int_z^\infty
\textbf{1}_{h(\sqrt[3]{y^3+z^3},-y,-z)\in\mathcal H}
\frac{dy}{3(y^3+z^3)^{2/3}}\,dz\\
&=4\int_0^\infty\int_1^\infty
\textbf{1}_{zh(\sqrt[3]{t^3+1},-t,-1)\in\mathcal H}
\frac{dt}{(t^3+1)^{2/3}}\frac{dz}{z}\\
&=4\int_1^\infty\int_{H_1/h(\sqrt[3]{t^3+1},-t,-1)}^{H_2/h(\sqrt[3]{t^3+1},-t,-1)}
\frac{dz}{z}\frac{dt}{(t^3+1)^{2/3}}
=\sigma_\infty\log\frac{H_2}{H_1},
\end{aligned}
\end{equation}
where $\sigma_\infty=4\int_1^\infty(t^3+1)^{-2/3}\,dt=
\frac23\frac{\Gamma(\frac13)^2}{\Gamma(\frac23)}$.

Heath-Brown conjectures that the number $n(B)$ of solutions to
\eqref{eq:threecubes}, up to permutation,
satisfying $\max\{|x|,|y|,|z|\}\le B$ is asymptotic to
$$
e(B)\coloneqq \rhos\log{B}\quad\text{as }B\to\infty,
\qquad\text{where }\rhos\coloneqq \tfrac16\sigma_\infty\prod_p\sigma_p.
$$
As shown above, the real density does not depend on the precise choice
of the height function~$h$.  We thus conjecture that the same asymptotic density applies to the
solutions satisfying $h(x,y,z)\le B$ for any similar choice of $h$,
including, for example,
$$
\min\{|x|,|y|,|z|\},\quad
|xyz|^{\frac13},\quad
\text{and}\quad
d=\min\{|x+y|,|x+z|,|y+z|\}.
$$

Let us now define
$$
r_p\coloneqq \frac{\sigma_p}
{L_p(1,f_1)^3L_p(2,f_2)L_p(2,f_1)^{-6}\zeta_p(2)^{-6}L_p(2,(\tfrac{\cdot}{3}))^{-3}},
$$
where
$$
\zeta_p(s)=\frac1{1-p^{-s}}
\quad\text{and}\quad
L_p(s,(\tfrac{\cdot}{3}))=\frac1{1-\left(\frac{p}{3}\right)p^{-s}}.
$$
A straightforward calculation shows that
$$
r_p=1-\frac{3a_pc_p(k)+O(1)}{p^3}.
$$
Since $-a_pc_p(k)$ is the coefficient of $p^{-s}$ in the
Rankin--Selberg $L$-function $L(s,f_1\boxtimes f_2)$, we expect
square-root cancellation in the product $\prod_pr_p$. Under the generalized Riemann hypothesis (GRH),
for large $X$ we have
\begin{equation}\label{eq:rpest}
\prod_p\sigma_p=\bigl(1+O(X^{-2}\log{X})\bigr)
L(1,f_1)^3L(2,f_2)L(2,f_1)^{-6}\zeta(2)^{-6}L(2,(\tfrac{\cdot}{3}))^{-3}
\prod_{p\le X}r_p.
\end{equation}

Applying \eqref{eq:rpest} with $X=10^9$ allows us to compute the solution
densities $\rhos$ to roughly 18 digits of precision for all cubefree
$k\le1000$. To evaluate the $L$-functions, we used the extensive
functionality available for that purpose in \texttt{PARI/GP}~\cite{PARI2}.
Since our goal is merely to gather some statistics, we
content ourselves with a heuristic estimate of the error in this
computation, although it could be rigorously certified with more work.
Some examples are shown in Table~\ref{tab:rhos}.

\begin{table}[tbh!]
\small
\begin{tabular}{rrrrrrrrrrrr}
&&&&\multicolumn{2}{c}{$B=10^5$}&&\multicolumn{2}{c}{$B=10^{10}$}&&\multicolumn{2}{c}{$B=10^{15}$}\\
\cmidrule{5-6}\cmidrule{8-9}\cmidrule{11-12}
$k$ & $\rhos$ & $\lceil \exp (1/\rhos)\rceil$ && $e(B)$ & $n(B)$ && $e(B)$ & $n(B)$ && $e(B)$ & $n(B)$\\\midrule
$858$ & $0.028504$ & $1723846985902459$ && $0.328$ & $1$ && $0.656$ & $2$ && $0.984$ & $2$\\
$276$ & $0.031854$ & $43031002119138$ && $0.367$ & $1$ && $0.733$ & $1$ && $1.100$ & $2$\\
$390$ & $0.032935$ & $15358736844736$ && $0.379$ & $0$ && $0.758$ & $0$ && $1.138$ & $0$\\
$516$ & $0.033062$ & $13665771588173$ && $0.381$ & $0$ && $0.761$ & $1$ && $1.142$ & $1$\\
$663$ & $0.033196$ & $12097471969974$ && $0.382$ & $0$ && $0.764$ & $1$ && $1.147$ & $1$\\
$975$ & $0.038722$ & $164297126902$ && $0.446$ & $0$ && $0.892$ & $0$ && $1.337$ & $0$\\
$165$ & $0.039636$ & $90602378809$ && $0.456$ & $0$ && $0.913$ & $0$ && $1.369$ & $0$\\
$555$ & $0.042706$ & $14770444441$ && $0.492$ & $1$ && $0.983$ & $2$ && $1.475$ & $2$\\
$921$ & $0.044142$ & $6895540744$ && $0.508$ & $0$ && $1.016$ & $0$ && $1.525$ & $0$\\
$348$ & $0.044632$ & $5378175303$ && $0.514$ & $2$ && $1.028$ & $2$ && $1.542$ & $3$\\
$906$ & $0.049745$ & $537442063$ && $0.573$ & $0$ && $1.145$ & $0$ && $1.718$ & $0$\\
$579$ & $0.050838$ & $348939959$ && $0.585$ & $0$ && $1.171$ & $0$ && $1.756$ & $0$\\
$114$ & $0.058459$ & $26853609$ && $0.673$ & $0$ && $1.346$ & $0$ && $2.019$ & $0$\\
$3$ & $0.061052$ & $12985612$ && $0.703$ & $2$ && $1.406$ & $2$ && $2.109$ & $2$\\
$732$ & $0.063137$ & $7561540$ && $0.727$ & $0$ && $1.454$ & $0$ && $2.181$ & $0$\\
$633$ & $0.079660$ & $283059$ && $0.917$ & $0$ && $1.834$ & $0$ && $2.751$ & $0$\\
$33$ & $0.088833$ & $77422$ && $1.023$ & $0$ && $2.045$ & $0$ && $3.068$ & $0$\\
$795$ & $0.089491$ & $71273$ && $1.030$ & $0$ && $2.061$ & $0$ && $3.091$ & $0$\\
$42$ & $0.113449$ & $6732$ && $1.306$ & $0$ && $2.612$ & $0$ && $3.918$ & $0$\\
$627$ & $0.129565$ & $2249$ && $1.492$ & $0$ && $2.983$ & $0$ && $4.475$ & $0$\\

\bottomrule
\end{tabular}
\bigskip

\caption{Selected $\rhos$ and $\lceil \exp (1/\rhos)\rceil = \min\{B\in
\Z:e(B)\ge 1\}$ values for $k\le 1000$, including the ten smallest $\rhos$
and all $k$ with $n(10^{15})=0$.}\label{tab:rhos}
\end{table}

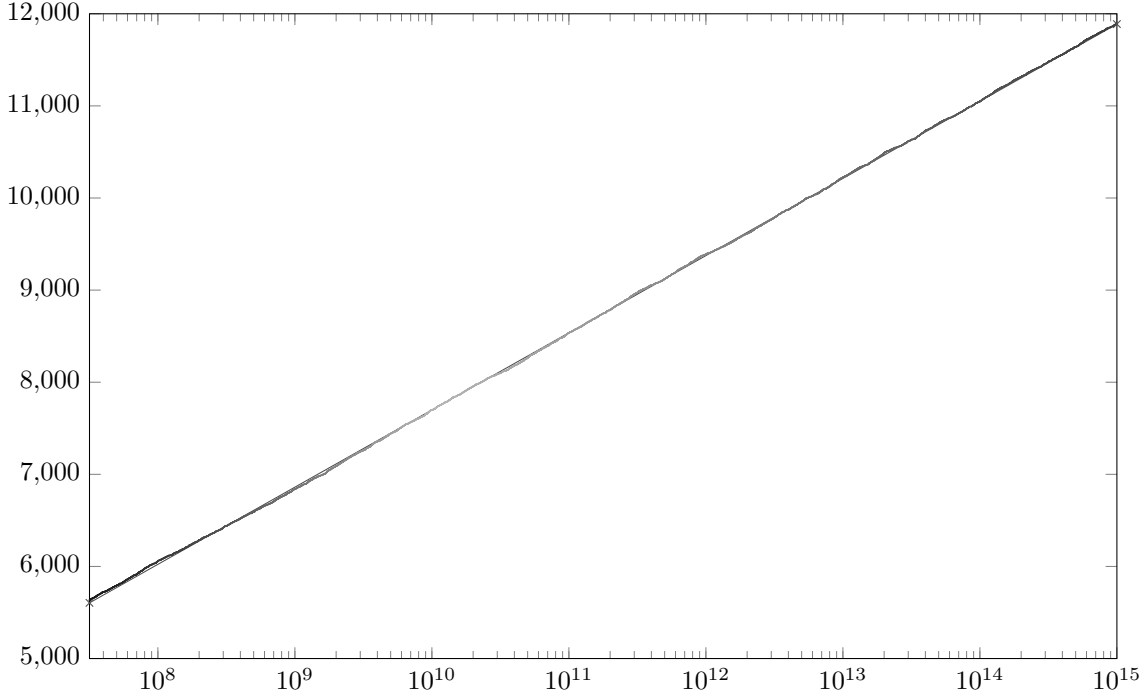
\begin{figure}[tbh!]
\begin{tikzpicture}
\selectcolormodel{gray}
\begin{semilogxaxis}[
width=6in,
height=4in,
enlargelimits=false,
ytick={5000,6000,7000,8000,9000,10000,11000,12000},
ymin=5000,
ymax=12000,
yticklabel style={
    /pgf/number format/fixed,
    /pgf/number format/precision=5
},
scaled y ticks=false
]
\addplot+[only marks, scatter, mark=x, mark size=0.01cm] table{huismanplot.dat};
\addplot+[mark=x] coordinates {
	(31681904,5604.391)
	(1000000000000000,11888.181)
};
\end{semilogxaxis}
\end{tikzpicture}
\caption{Scatter plot of $N_{1000}(B)$ as a function of
$B\in [10^{7.5},10^{15}]$ based on Huisman's dataset \cite{Huisman},
compared to the line $\rho_{1000}\log B +C$.}\label{fig:density}
\end{figure}

We compared Huisman's data set to an average form of
Heath-Brown's conjecture as follows.
For an integer $K\ge3$, define
$$
N_K(B)\coloneqq \#\{(k,x,y,z)\in\Z^4:x^3+y^3+z^3=k\text{ cubefree},\,3\le k\le K,\,|z|\le|y|\le|x|\le B\}
$$
and
$$
\rho_K\coloneqq \sum_{\substack{k\in\Z\cap[3,K]\\k\text{ cubefree}}}\rhos(k).
$$
Then Heath-Brown's conjecture implies that for fixed $K$, we have
$N_K(B)\sim\rho_K\log{B}$ as $B\to\infty$.
The plot in Figure~\ref{fig:density} compares $N_{1000}(B)$ for $B\in[10^{7.5},10^{15}]$,
computed from Huisman's data \cite{Huisman}, with $\rho_{1000}\log{B}+C$,
where $\rho_{1000}\approx363.869$ and $C\approx-679.4$ was chosen to
minimize the mean square difference.  Out of 6256 points, the two plots
never differ by more than 42, which gives strong evidence for Heath-Brown's
conjecture, at least on average over $k$.

\subsection{Divisor and arithmetic progression densities}
In this section we assume that $k\equiv\pm3\pmod*9$ and derive estimates for
the density of arithmetic progressions arising from cube roots of $k$ modulo
$d$. Define
$$
\delta_d\coloneqq \begin{cases}
1&\text{if }\exists z\in\Z\text{ s.t.\ }z^3\equiv k\pmod*{d}\text{ and }
\ord_p(d)\in\{0,\ord_p(k/3)\}\;\forall p\mid k,\\
0&\text{otherwise},
\end{cases}
$$
and
$$
F(s)\coloneqq \sum_{d=1}^\infty\frac{\delta_d}{d^s}.
$$
As shown in \cite{Booker}, any $d$ arising from a solution to
\eqref{eq:threecubes} must satisfy $\delta_d=1$, and we only consider
such $d$ in our algorithm.
\smallskip

For $p\nmid k$ and $e>0$, we have
$\delta_{p^e}=\frac{c_p(k)+2-\left(\frac{p}{3}\right)}{3}$, so that
$F(s)=\prod_pF_p(s)$, where
$$
F_p(s)\coloneqq\begin{cases}
\Bigl(1-\frac{c_p(k)+2-\left(\frac{p}{3}\right)}{3p^s}\Bigr)^{-1}
&\text{if }p\nmid k,\\
1+p^{-\ord_p(k)s}&\text{if }p\mid\frac{k}{3},\\
1&\text{if }p=3.
\end{cases}
$$
For $p\nmid k$, the local factor
$\frac{F_p(s)^3L_p(s,(\frac{\cdot}{3}))}{\zeta_p(s)^2L_p(s,f_1)}$
is $1+O(p^{-3s})$. Therefore, $F(s)^3$ has meromorphic continuation to
$\Re(s)>\frac13$, with a pole of order $2$ at $s=1$ and no other poles
in the region $\{s\in\C:\Re(s)\ge1\}$.
By \cite[Theorem~3.1]{Kato}, it follows that
$$
\sum_{d\le\dmax}\delta_d\sim
\rhod\frac{\dmax}{\sqrt[3]{\log\dmax}}
\quad\text{as }\dmax\to\infty,
\qquad\text{where }
\rhod\coloneqq\frac{\bigl(\lim_{s\to1}F(s)^3(s-1)^2\bigr)^{\frac13}}{\Gamma(\frac23)}.
$$
In turn, we have
$$
\lim_{s\to1}F(s)^3(s-1)^2=\bigl(1+O(X^{-2})\bigr)
\frac{L(1,f_1)}{L(1,(\frac{\cdot}{3}))}
\prod_{p\le X}\frac{F_p(1)^3L_p(1,(\frac{\cdot}{3}))}{\zeta_p(1)^2L_p(1,f_1)}.
$$

Let us now define
$$
G(s)\coloneqq \sum_{d=1}^\infty\frac{\delta_dr_d(k)}{d^s},
\quad\text{where }
r_d(k)=\#\{z\pmod*{d}:z^3\equiv k\pmod*{d}\}.
$$
Then $G(s)=\prod_pG_p(s)$, where
$$
G_p(s)\coloneqq \begin{cases}
1+\frac{1+c_p(k)}{p^s-1}&\text{if }p\nmid k,\\
1+p^{(1-s)\ord_p(k)-1}&\text{if }p\mid\frac{k}{3},\\
1&\text{if }p=3.
\end{cases}
$$
For $p\nmid k$ we have
$\frac{G_p(s)L_p(2s,f_1)\zeta_p(2s)}{L_p(s,f_1)\zeta_p(s)}=1+O(p^{-3s})$.
Therefore,
$$
\sum_{d\le\dmax}\delta_dr_d(k)\sim\rhoz\dmax
\quad\text{as }\dmax\to\infty,
\qquad\text{where }
\rhoz\coloneqq\lim_{s\to1}G(s)(s-1).
$$
In turn, we have
$$
\rhoz=\bigl(1+O(X^{-2})\bigr)\frac{L(1,f_1)}{L(2,f_1)\zeta(2)}
\prod_{p\le X}\frac{G_p(1)L_p(2,f_1)\zeta_p(2)}{L_p(1,f_1)\zeta_p(1)}.
$$

Table~\ref{tab:rhoap} lists estimates $\rhoz\dmax$ for the number
$\piz(\dmax)$ of arithmetic progressions modulo $d\le\dmax$,
as well as estimates $\rhod\dmax/\sqrt[3]{\log\dmax}$ for the number
$\pid(\dmax)$ of admissible $d\le\dmax$, along with the ratios of
these quantities.

\begin{table}[tbh!]
\small
\begin{tabular}{rrrrrrr}
$k$ & $\rhoz\dmax$ & $\piz(\dmax)$ & $\frac{\rhod\dmax}{\sqrt[3]{\log\dmax}}$ & $\pid(\dmax)$ & $\frac{\rhoz\sqrt[3]{\log \dmax}}{\rhod}$ &$\frac{\piz(\dmax)}{\pid(\dmax)}$\\\midrule
  $3$ & $476709085641$ & $476709082386$ & $221480415360$ & $222316170600$ & $2.152$ & $2.144$\\
 $42$ & $439262042312$ & $439262055314$ & $194525166395$ & $195043114314$ & $2.258$ & $2.252$\\
$114$ & $346031225026$ & $346031232985$ & $169944552313$ & $169697769695$ & $2.036$ & $2.039$\\
$165$ & $398768628911$ & $398768635237$ & $201820401130$ & $201648107384$ & $1.976$ & $1.978$\\
$390$ & $361424697190$ & $361424750258$ & $170411108873$ & $170119932464$ & $2.121$ & $2.125$\\
$579$ & $467532879762$ & $467532936236$ & $220746986113$ & $221627128720$ & $2.118$ & $2.110$\\
$627$ & $544308148137$ & $544308117802$ & $238234806279$ & $240026258762$ & $2.285$ & $2.268$\\
$633$ & $510771397972$ & $510771391669$ & $227368579096$ & $228697959163$ & $2.246$ & $2.233$\\
$732$ & $396862883895$ & $396862943789$ & $145013347786$ & $145167910326$ & $2.737$ & $2.734$\\
$906$ & $353110285004$ & $353110236539$ & $166128603588$ & $165813813631$ & $2.126$ & $2.130$\\
$921$ & $420143131383$ & $420143101621$ & $212693499876$ & $212924474063$ & $1.975$ & $1.973$\\
$975$ & $461977372770$ & $461977396756$ & $194140103965$ & $194481735572$ & $2.380$ & $2.375$\\\bottomrule
\end{tabular}
\bigskip

\caption{Comparison of estimated and actual counts of arithmetic progressions modulo $d\le \dmax=10^{12}$ for various $k$ of interest.}\label{tab:rhoap}
\end{table}

\begin{remark}
The average number of arithmetic progressions modulo $d\le \dmax$ listed in Table~\ref{tab:rhoap} is strikingly small.  Even for $\dmax=10^{24}$, which is well beyond the feasible range, the average is around 3 and never above 3.5 for any of the listed $k$.
\end{remark}

\begin{remark}\label{rem:rhoap}
For any fixed choice of the ratio $R=\zmax/\dmax$, the total running time of our algorithm is roughly proportional to $\rhoz\dmax$.
The constant of proportionality can be estimated by running the algorithm on a suitable sample of
$d\le \dmax$. These estimates allow us to efficiently manage resource allocation
in large distributed computations; see Section~\ref{sec:computation} for details.
\end{remark}

\section{Cubic reciprocity}
In \cite{Cassels}, Cassels used cubic reciprocity to prove
that whenever $x,y,z\in\Z$ satisfy
$x^3+y^3+z^3=3$ we must have $x\equiv y\equiv z\pmod*{9}$.
For fixed $d=|x+y|$ it follows that $z$ is determined modulo~$81$.
Colliot-Th\'el\`ene and Wittenberg \cite{CW} later recast this phenomenon in
terms of Brauer--Manin obstructions, and showed that for any
$k$, the solutions to \eqref{eq:threecubes} are always forbidden for
some residue classes globally but not locally.\footnote{Thus strong
approximation fails for \eqref{eq:threecubes}, but
this is never enough to forbid the existence of integer solutions
outright, so there is no Brauer--Manin obstruction.}
In this section we extend Cassels' analysis to all cubefree
$k\equiv\pm3\pmod*{9}$, and derive constraints on the residue class of
$z\pmod*{q}$ for a certain modulus $q\mid 27k$.
We assume throughout that $k\equiv3\epsilon\pmod*9$ for a fixed
$\epsilon\in\{\pm1\}$.

Given $\alpha,\beta\in\o_F$ with $\beta\notin\sqrt{-3}\o_F$, let
$\left(\frac{\alpha}{\beta}\right)_3$ be the cubic residue symbol, as defined
in \cite[Ch.\ 9 and 14]{IR}.
Put $\zeta_3=\frac{-1+\sqrt{-3}}2\in\o_F$.
For integers $x,y$ satisfying $x\equiv y\equiv\epsilon\pmod*{3}$, define
$$
\chi_k(x,y)\coloneqq \zeta_3^{\epsilon(y-x)/3}
\left(\frac{\zeta_3x+\zeta_3^{-1}y}{k/3}\right)_3.
$$
Note that $\chi_k(x,y)$ depends only on the residue classes of $x,y\pmod*{3k}$.

\begin{definition}\label{def:admissible}
We say that a pair $(d,z)\in\Z^2$ is \emph{admissible} if
there exist $x,y\in\Z$ satisfying the following conditions:
\begin{enumerate}
\item $x+y\equiv-\epsilon\left(\frac{d}{3}\right)d\pmod*{27k}$;
\item $x^3+y^3+z^3\equiv k\pmod*{81k}$;
\item $\{\chi_k(x,y),\chi_k(x,z),\chi_k(y,z)\}\subseteq\{0,1\}$.
\end{enumerate}
\end{definition}
Note that this definition depends only on the residue classes of
$d,z\pmod*{27k}$.
\begin{lemma}
Let $(x,y,z)\in\Z^3$ be a solution to \eqref{eq:threecubes},
and let $d\coloneqq |x+y|$. Then $(d,z)$ is admissible.
\end{lemma}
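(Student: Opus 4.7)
The plan is to verify each of the three admissibility conditions directly, taking $(x,y)$ to be the integers appearing in the given solution.

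First I would extract the relevant mod-$3$ congruences. Reducing $x^3+y^3+z^3=k$ modulo $9$ and using that rational cubes modulo~$9$ lie in $\{0,\pm1\}$, the only way to obtain $3\epsilon\pmod*{9}$ on the right-hand side is for each cube to equal $\epsilon$, forcing $x\equiv y\equiv z\equiv\epsilon\pmod*{3}$. In particular $(y-x)/3$, $(z-x)/3$, and $(z-y)/3$ are all integers, so each of $\chi_k(x,y)$, $\chi_k(x,z)$, $\chi_k(y,z)$ is well-defined.

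Conditions~(1) and~(2) are then easy. From $x+y\equiv 2\epsilon\equiv -\epsilon\pmod*{3}$ and $|x+y|=d$, the sign of $x+y$ is forced by $d\pmod*{3}$; combining with $\left(\frac{d}{3}\right)\equiv d\pmod*{3}$, a short case check yields $x+y=-\epsilon\left(\frac{d}{3}\right)d$ as integers, so~(1) holds at any modulus, in particular $27k$. Condition~(2) is the equation itself.

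For condition~(3), the symmetry of $x^3+y^3+z^3=k$ under permutations of $(x,y,z)$ reduces the task to showing $\chi_k(x,y)\in\{0,1\}$. The central input is the factorization
\[
(x+y)(x+\zeta_3y)(x+\zeta_3^{-1}y)=x^3+y^3=k-z^3
\]
in $\o_F$, which gives $k\equiv z^3\pmod*{(x+\zeta_3y)\o_F}$ and so makes $k$ (and hence $k/3$, up to the unit $3^{-1}$) a cube modulo $x+\zeta_3y$. Writing $\zeta_3x+\zeta_3^{-1}y=\zeta_3(x+\zeta_3y)$ and applying cubic reciprocity to flip $\left(\frac{x+\zeta_3y}{k/3}\right)_3$ to $\left(\frac{k/3}{x+\zeta_3y}\right)_3$, the cubic residue symbol appearing in $\chi_k(x,y)$ reduces to the cube residue of $k$ modulo $x+\zeta_3y$, times supplementary factors of the form $\left(\frac{\zeta_3}{\cdot}\right)_3$ and $\left(\frac{3}{\cdot}\right)_3$ evaluated at a primary generator of $(x+\zeta_3y)$. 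Using $x\equiv y\equiv\epsilon\pmod*{3}$ to identify that primary generator, the supplementary laws produce exactly the factor $\zeta_3^{-\epsilon(y-x)/3}$, which is precisely cancelled by the correction term $\zeta_3^{\epsilon(y-x)/3}$ built into the definition of $\chi_k$. The product is therefore $1$, except in the degenerate case where $(x+\zeta_3y)$ and $(k/3)$ share a common prime in $\o_F$, in which case the symbol vanishes and $\chi_k(x,y)=0$.

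The main obstacle is the last step: choosing the primary generator and matching up the supplementary cubic reciprocity contributions with $\zeta_3^{\epsilon(y-x)/3}$ requires careful bookkeeping of units in $\o_F$ and of the residue class of $x+\zeta_3y$ modulo~$3$, and is precisely where the specific exponent $\epsilon(y-x)/3$ in the definition of $\chi_k$ justifies itself.
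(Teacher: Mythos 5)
Your argument follows the same route as the paper's: derive $x\equiv y\equiv z\equiv\epsilon\pmod*3$ from the mod-$9$ constraint, get condition~(1) by a sign analysis, and get condition~(3) by combining the factorization $x^3+y^3=(x+y)\cdot N(\zeta_3x+\zeta_3^{-1}y)$ with cubic reciprocity, using the resulting congruence $k\equiv z^3$ modulo the degree-one factor to land in $\{0,1\}$. However, you have explicitly left a gap at the key computational step: you identify that the supplementary laws for $\left(\frac{\zeta_3}{\cdot}\right)_3$ and $\left(\frac{3}{\cdot}\right)_3$ must produce the factor $\zeta_3^{-\epsilon(y-x)/3}$ to cancel the normalization in $\chi_k$, but then defer the verification, calling it ``the main obstacle.'' That deferred verification \emph{is} the proof of condition~(3); without it the argument is incomplete.

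The bookkeeping is not as fraught as you suggest, because there is a single clean choice of primary associate. Set $\gamma\coloneqq\epsilon(\zeta_3x+\zeta_3^{-1}y)=-\epsilon y+\epsilon(x-y)\zeta_3$. Since $x\equiv y\equiv\epsilon\pmod*3$, we have $-\epsilon y\equiv-1\pmod*3$ and $\epsilon(x-y)\equiv0\pmod*3$, so $\gamma$ is already primary; you do not need to hunt for a unit multiplier. The supplementary law of \cite[Ch.~9, Ex.~19]{IR}, applied to this primary $\gamma$, gives $\left(\frac{3}{\gamma}\right)_3=\zeta_3^{\epsilon(y-x)/3}$ on the nose, which is exactly the correction factor in the definition of $\chi_k$. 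From there, $\chi_k(x,y)=\left(\frac{3}{\gamma}\right)_3\left(\frac{\epsilon\gamma}{k/3}\right)_3=\left(\frac{-3\epsilon}{\gamma}\right)_3\left(\frac{\gamma}{-\epsilon k/3}\right)_3$ (since the symbol depends only on the ideal $\beta\o_F$ in the denominator and is trivial on $\pm\epsilon$), cubic reciprocity turns this into $\left(\frac{k}{\gamma}\right)_3$, and the congruence $k\equiv z^3\pmod*{\gamma\o_F}$ finishes the argument. In short: your outline is correct and matches the paper, but you need to actually pin down $\gamma$ as above and invoke the supplementary law rather than gesture at it.
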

\begin{proof}
Since every cube is congruent to $0$ or $\pm1\pmod*9$, we have $x\equiv
y\equiv z\equiv\epsilon\pmod*3$, so that
$x+y\equiv-\epsilon\equiv-\epsilon\left(\frac{d}{3}\right)d\pmod*3$.
As $d=|x+y|$, it follows that $x+y=-\epsilon\left(\frac{d}{3}\right)d$,
so condition (1) of the definition is satisfied. Condition (2) then
follows directly from \eqref{eq:threecubes}.

Now let
$$
\gamma\coloneqq \epsilon(\zeta_3x+\zeta_3^{-1}y)=-\epsilon y+\epsilon(x-y)\zeta_3.
$$
By \cite[Ch.~9, Ex.~19]{IR}, we have
\begin{align*}
\chi_k(x,y)=\zeta_3^{\epsilon(y-x)/3}
\left(\frac{\zeta_3x+\zeta_3^{-1}y}{k/3}\right)_3
=\left(\frac{3}{\gamma}\right)_3\left(\frac{\epsilon\gamma}{k/3}\right)_3
=\left(\frac{-3\epsilon}{\gamma}\right)_3\left(\frac{\gamma}{-\epsilon k/3}\right)_3,
\end{align*}
where the last equality follows from the fact that $(\alpha/\beta)_3$
depends only on the ideal $\beta\o_F$ and $(\pm\epsilon/\beta)_3=((\pm\epsilon)^3/\beta)_3=1$.
By cubic reciprocity \cite[Ch.~14, Theorem~1]{IR}, this equals
$$
\left(\frac{-3\epsilon}{\gamma}\right)_3
\left(\frac{-\epsilon k/3}{\gamma}\right)_3
=\left(\frac{k}{\gamma}\right)_3.
$$
Noting that $x^3+y^3=(x+y)\gamma\overline{\gamma}$, we have
$k\equiv z^3\pmod*{\gamma\o_F}$, whence
$$
\chi_k(x,y)=\left(\frac{z^3}{\gamma}\right)_3\in\{0,1\},
$$
and by symmetry, we also have $\chi_k(x,z),\chi_k(y,z)\in\{0,1\}$; thus condition (3) holds as well.
\end{proof}

\begin{lemma}\label{lem:admissibility}
Let
$$
q\coloneqq 27k\!\!\!\!\!\!\!\!\!\!\!\prod_{\substack{p\mid k\\\ord_p(k)=2\\p=2\text{ or }c_p(2)=-1}}\!\!\!\!\!\!\!\!\!\!\!\!\!p^{-1}\ ,
$$
and let $d,z,z'\in\Z$ satisfy $z'\equiv z\pmod*{q}$.  Then
$(d,z)$ is admissible iff $(d,z')$ is admissible.
\end{lemma}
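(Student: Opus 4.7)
The plan is to reduce the lemma to a local statement at each prime $p$ dropped in passing from $27k$ to $q$. By the Chinese Remainder Theorem, admissibility of $(d,z)$ decomposes into conditions at each prime power dividing $27k$; since $q$ and $27k$ agree outside the set $S$ of dropped primes, the local conditions outside $S$ are unaffected when $z$ is shifted within its class modulo $q$. It therefore suffices to fix a single prime $p\in S$ (so $p^2\|k$, $p\ne 3$, and either $p=2$ or $c_p(2)=-1$) and show that admissibility is invariant when $z$ is modified by a multiple of $p$ at that prime. I would prove this by showing that any witness $(x,y)$ for $(d,z)$ is also a witness for $(d,z')$; the reverse implication follows by symmetry.

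Conditions (1) and (2) in Definition~\ref{def:admissible} are the easier pieces. Condition (1) involves only $d$, not $z$, and is automatically preserved. For condition (2), the congruence $z^3\equiv k\pmod*{p^2}$ combined with $p^2\|k$ forces $p\mid z$, whence $p^2\mid z^2$. For $z'=z+pt$, the binomial expansion $z'^3=z^3+3ptz^2+3p^2t^2z+p^3t^3$ shows every term beyond $z^3$ is divisible by $p^3$, so $z'^3\equiv z^3\pmod*{p^3}$. Since $\ord_p(81k)=2$ for $p\ne 3$, condition (2) is preserved at $p$; at all other primes, $z'\equiv z$ modulo a sufficient power, keeping condition (2) intact.

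Condition (3) requires more care. The character $\chi_k(x,y)$ does not involve $z$. For $u\in\{x,y\}$, the ratio $\chi_k(u,z')/\chi_k(u,z)$ splits into a prefactor ratio $\zeta_3^{\epsilon(z'-z)/3}$ and a ratio of cubic residue symbols. The prefactor equals $1$ because $9\mid q$. The cubic symbol factorizes multiplicatively over primes $\pi$ of $\o_F$ dividing $k/3$; at primes above $\ell\ne p$, the residues of $\zeta_3 u+\zeta_3^{-1}z$ and $\zeta_3 u+\zeta_3^{-1}z'$ modulo $\pi^{\ord_\pi(k/3)}$ coincide since $z'\equiv z$ modulo a sufficient power of $\ell$, so the local factors agree. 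At primes above $p$, one uses the identity $\left(\frac{\alpha}{\pi^e}\right)_3=\left(\frac{\alpha}{\pi}\right)_3^e$, which depends only on $\alpha$ modulo $\pi$, together with $z'\equiv z\pmod*{p}$. If $p\mid u$ then both $\chi_k(u,z)$ and $\chi_k(u,z')$ vanish and lie in $\{0,1\}$; a case analysis via condition (2) rules out $p\mid x$ and $p\mid y$ simultaneously, since that would imply $p^3\mid x^3+y^3=k-z^3$, contradicting $p^2\|k$ and $p^3\mid z^3$.

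The principal obstacle is the detailed handling of condition (3), where one must carefully track the cubic residue symbol at the dropped prime $p$. The hypotheses $p=2$ and $c_p(2)=-1$ correspond to the two cases in which the multiplicative structure of the cubic symbol at $p^2$ reduces cleanly to data modulo $p$: in the first, $p$ is inert in $\o_F$ with small residue field; in the second, $p=\pi\bar\pi$ splits with $2$ serving as a non-cubic residue in the local computation. This dichotomy accounts for the specific choice of $q$ in the lemma.
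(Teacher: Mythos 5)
Your proposal diverges from the paper's proof in a critical place, and the divergence is not a valid alternative: the plan to show that \emph{the same witness} $(x,y)$ works for $(d,z')$ is the step that fails, and repairing it is exactly where the hypothesis ``$p=2$ or $c_p(2)=-1$'' enters in a way your sketch does not capture.

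The gap is in your treatment of condition (2). You assert ``the congruence $z^3\equiv k\pmod*{p^2}$ combined with $p^2\|k$ forces $p\mid z$,'' but condition (2) only gives $x^3+y^3+z^3\equiv k\pmod*{p^2}$; it gives $z^3\equiv k\pmod*{p^2}$ only if $p^2\mid x^3+y^3$, which is not part of admissibility. If, say, $p\nmid d$ (which is allowed) and $p\nmid x^2-xy+y^2$, then $p\nmid x^3+y^3$, so $p\nmid z$, and your binomial estimate only yields $(z')^3\equiv z^3\pmod*{p}$, not mod $p^2$. Thus condition (2) genuinely breaks under $z\mapsto z'$ if the witness is held fixed, and no amount of analysis of the cubic symbol will rescue it. The paper's proof instead \emph{modifies} the witness, replacing $(x,y)$ by $(x+bp,\,y-bp)$ with $b\in 27kp^{-2}\Z$ chosen so that $az^2+b(x^2-y^2)\equiv 0\pmod*{p}$ (where $z'=z+ap$). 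This choice of $b$ is possible unless $p\mid(x^2-y^2)$ and $p\nmid z$. In that bad case one argues: if $p\mid(x+y)$ then $p\mid z$, contradiction; otherwise $p\mid(x-y)$, $p>2$, $p\nmid x$, and $2x^3+z^3\equiv 0\pmod*{p}$ shows $2$ is a cubic residue mod $p$, which contradicts $c_p(2)=-1$. (For $p=2$ the bad case cannot arise because $2\mid x-y$ already forces $2\mid x+y$.) So the hypothesis on $p$ is not about ``the multiplicative structure of the cubic symbol at $p^2$'' — it is precisely what rules out the configuration in which the witness cannot be adjusted. Once the modified witness is in play, the $\chi_k$ invariance is a short check (the perturbations $bp$, $ap$ lie in $27kp^{-1}\Z$ and $p\Z$ respectively, and hence vanish modulo every prime of $\o_F$ dividing $k/3$ and modulo $9$), rather than the delicate local analysis you describe.
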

\begin{proof}
Suppose that $(d,z)$ is admissible.
Let $p$ be a prime divisor of $27k/q$, and
consider $z'\equiv z\pmod*{27k/p}$.
By the Chinese remainder theorem
it suffices to show that $(d,z')$ is admissible in this case.

Set $a=(z'-z)/p$, so that $z'=z+ap$.
Let $x,y$ be integers satisfying the conditions in
Definition~\ref{def:admissible}, and
let $x'=x+bp$, $y'=y-bp$ for some $b\in27kp^{-2}\Z$.
Then
$$
(x')^3+(y')^3+(z')^3\equiv x^3+y^3+z^3+3p[az^2+b(x^2-y^2)]
\equiv 3p[az^2+b(x^2-y^2)]\pmod*{p^2}.
$$

If $p\mid(x^2-y^2)$ and $p\nmid(x+y)$ then we have
$x\equiv y\pmod*{p}$, $p>2$ and $p\nmid x$, which means that
$2x^3+z^3\equiv0\pmod*{p}$ and $2\equiv(-z/x)^3\pmod*{p}$ is a cubic residue mod $p$.
But $p>2$ implies $c_p(2)=-1$, meaning $p\equiv 1\pmod* 3$ and $2^{(p-1)/3}\not\equiv 1\pmod* p$, so $2$ cannot be a cubic residue mod $p$
and we must have $p\nmid(x^2-y^2)$ or $p\mid(x+y)$.

If $p\nmid(x^2-y^2)$ then we may choose $b$ so that
$b(x^2-y^2)\equiv -az^2\pmod*{p}$, while if $p\mid(x+y)$ then
$p\mid z$ and any choice of $b$ suffices.
It follows that
$$
(x')^3+(y')^3+(z')^3\equiv k\pmod*{81k}.
$$
Moreover, we have
$\chi_k(x',y')=\chi_k(x,y)$, $\chi_k(x',z')=\chi_k(x,z)$ and
$\chi_k(y',z')=\chi_k(y,z)$, by inspection. Thus $(d,z')$ is admissible, as desired.
\end{proof}
Thus, the definition of admissibility factors through
$\Z/27k\Z\times\Z/q\Z$.

\begin{example}
The following table shows the ratio
$$
\frac{\sum_{d\pmod*{27k}}\#\{z\pmod*{q}:(d,z)\text{ is admissible}\}}
{\sum_{d\pmod*{27k}}\#\{z\pmod*{q}:\exists x\pmod*{q}\text{ s.t.\ }
x^3+(d-x)^3+z^3\equiv k\pmod*{3q}\}},
$$
which is the average density of admissible residues
$z\pmod*{q}$ among all locally permitted residues,
for a few $k$ of interest:
\begin{center}
\begin{tabular}{r|rrrrr}
$k$ & $3$ & $33$ & $42$ & $114$ & $633$\\ \hline
density & $0.250$ & $0.590$ & $0.970$ & $0.962$ & $0.585$
\end{tabular}
\end{center}
Although the improvement is modest for some
$k$, those cases still benefit from imposing local constraints mod $q$, some 
of which were not used in \cite{Booker}; in particular, passing from mod
$9$ solutions to mod $81$ solutions reduces the density by a factor of $4/9$.
\end{example}

\subsection{Algorithm}\label{sec:algorithm}
Let $k\equiv 3\epsilon \pmod*{9}$ be a positive integer, and for each
positive integer~$m$, let
\[
\CR(m)\coloneqq \{z+m\Z:z^3\equiv k\pmod*{m}\} \subseteq\Z/m\Z
\]
denote the set of cube roots of $k$ modulo $m$.  Let $P$ be the set of
primes $p\nmid k$ for which $\#\CR(p)>0$; for $p\in P$ we then have
$\#\CR(p)=3$ if $p\equiv 1\pmod*3$ and $\#\CR(p)=1$ otherwise.

Let $A$ be a set of small auxiliary primes $p\nmid k$ whose product
exceeds $\dmax\zmax$; in practical computations we may take $A$ to be
the primes $p<256$ not dividing $k$.
Let $s\coloneqq\epsilon\left(\frac{d}{3}\right)$, so that any solution to
\eqref{eq:threecubes} with $d=|x+y|$ has $\sgn z=s$, 
and for positive integers $d$ and primes $p\nmid dk$ define
\[
\SR_d(p)\coloneqq
\begin{cases}
\{z+p\Z:3d(4s(z^3-k)-d^3)\equiv\square\pmod*{p}\}&\text{if }p>2,\\
\{k+d+2\Z\}&\text{if }p=2.
\end{cases}
\]

Finally, let $c_1>c_0>1$ and $c_2>1$ denote integers that we will choose
to optimize performance (typically $c_0\approx 4$, $c_1\approx 50$, and
$c_2\approx 6$), and let $q$ be the divisor of $27k$ defined in Lemma~\ref{lem:admissibility}.
\bigskip

\begin{algorithm}\label{alg}
Given $k,d_{\max},z_{\max}\in \Z_{>0}$ with $k\equiv3\epsilon\pmod*9$,
enumerate all pairs $(d,z)\in\Z^2$ for which there exist $(x,y,z)\in\Z^3$ satisfying \eqref{eq:threecubes}
with $|x|>|y|>|z|$, $\sqrt{k}<|z|\le\zmax$, and $|x+y|=d\le\dmax$ as follows:
\medskip

\noindent
Recursively enumerate all positive integers
$d_0=p_1^{e_1}\cdots p_n^{e_n}\le\dmax$, where $p_1>\cdots>p_n$ are
primes in $P$ and $e_i\in \Z_{>0}$. For each such $d_0$ do the following:
\vspace{3pt}
\begin{enumerate}[1.]
\setlength{\itemsep}{3pt}
\item For each positive divisor $d_1$ of $k/3$ with $\gcd(d_1,k/d_1)=1$, set
$d\coloneqq d_0d_1$ and let $\AR_d(q)$ be the set of $z+q\Z$ for which
$(d,z)$ is admissible.
\item Set $a\coloneqq 1$, and if $c_1 qd_0<\zmax$ then order the
$p\nmid d$ in $A$ by $\log\#\SR_d(p)/\log p$, and while $c_0qd_0pa<\zmax$
replace $a$ by $pa$, where $p$ is the next prime in the ordering.

\item Let $b$ be the product of $c_2$ primes $p\in A$ not dividing $da$,
chosen either using the ordering computed in the previous step or a fixed order.

\item Set $m\coloneqq d_0qa$, and let $\ZR(m)$ be the subset of
$\Z/m\Z$ that is identified with
\[
\CR(p_1^{e_1})\times\cdots\times\CR(p_n^{e_n})
\times\AR_d(q)\times\prod_{p\mid a}\SR_d(p)
\]
via the Chinese remainder theorem.
Let
\[\ZR(m,s,\zmax)\coloneqq\{z\in\Z:z+m\Z\in\ZR(m),\ \sgn{z}=s,
\text{ and } |z|\le \zmax\}.
\]
For each $z\in\ZR(m,s,\zmax)$, if $z+p\Z$ lies in $\SR_d(p)$ for all
$p\mid b$, check if $\Delta(d,z)$ is square, and if so output the pair $(d,z)$.
\end{enumerate}
\end{algorithm}

\begin{remarks}\label{rem:parallel}
The following remarks apply to the implementation of Algorithm~\ref{alg}.
\begin{itemize}
\item
The algorithm can be easily parallelized by restricting the range of $p_1$
and, for very small values of $p_1$, fixing $p_1$ and restricting the range
of $p_2$.

\item
The recursive enumeration of $d_0=p_1^{e_n}\cdots p_n^{e_n}$ ensures that typically only the
value of $p_n^{e_n}$ changes from one $d_0$ to the next, allowing the product
$\CR(p_1^{e_1})\times\cdots\times\CR(p_n^{e_n})$ to be updated incrementally
rather than recomputed for each $d_0$.

\item
The sets $\CR(p^e)$ are precomputed for $p\le\sqrt{\dmax}$,
as are the sets $\AR_d(q)$ for each $d\in \{1,\ldots q-1\}$ not divisible by 3,
and the sets $\SR_d(p)$ for each $p\in A$ and $d\in \{1,\ldots,p-1\}$.
This allows the sets $\ZR(m)$ to be efficiently enumerated using an explicit form of the Chinese remainder theorem that requires very little space.
We shall refer to this procedure as CRT enumeration.

\item
For $p\in A$ the precomputed sets $\SR_d(p)$ for $d\in\{1,\ldots,p-1\}$ are also
stored as bitmaps, as are Cartesian products of pairs
of these sets and certain triples; this facilitates testing whether $z+p\Z$ lies in $\SR_d(p)$
for $p\mid b$.
\end{itemize}
\end{remarks}

\begin{example}
For $k=33$ and $d=5$ we have $\CR(d)=\{2\}$ and $\sgn z=+1$.
For $\zmax=10^{16}$ this leaves $2\times 10^{15}$ candidate pairs $(5,z)$ to check.
We have $\#\AR_d(q)=14$ with $q=891$, which reduces this to approximately $3.143\times 10^{13}$ candidate pairs.
The table below shows the benefit of including additional primes $p\mid a$.
\smallskip

\begin{center}
\begin{tabular}{rrrrr}
$p\mid a$ & $\#\SR_d(p)$ & $\#\ZR(m)$ & $m$ & $\#\ZR(m,s,\zmax)$\\\toprule
 -- & -- & $14$ & $4455$ & $3.143\times 10^{13}$\\
$2$ & $1$ & $14$ & $8910$ & $1.571\times 10^{13}$\\
$7$ & $1$ & $14$ & $62370$ & $2.245\times 10^{12}$\\
$13$ & $3$ & $42$ & $810810$ & $5.180\times 10^{11}$\\
$17$ & $9$ & $378$ & $13783770$ & $2.742\times 10^{11}$\\
$23$ & $12$ & $4536$ & $317026710$ & $1.431\times 10^{11}$\\
$29$ & $15$ & $68040$ & $9193774590$ & $7.401\times 10^{10}$\\
$43$ & $19$ & $1292760$ & $395332307370$ & $3.270\times 10^{10}$\\
$67$ & $27$ & $34904520$ & $26487264593790$ & $1.318\times 10^{10}$\\
$103$ & $43$ & $1500894360$ & $2728188253160370$ & $5.501\times 10^{9\phantom{0}}$\\\bottomrule
\end{tabular}
\end{center}
\bigskip

The net gain is a factor of more than $363541$ over the na\"ive approach;
we gain a factor of about $63$ from cubic reciprocity and local constraints
mod $q$, and a factor of about $5712$ from the $p\mid a$.
In general, including auxiliary $p\mid a$ ensures that the number of $(d,z)$ we need to consider for small values of $d$ is a negligible proportion of the total computation. \end{example}

\begin{remark}
With CRT enumeration, we avoid the need to store the sets $\ZR(m)$, analogs of
which were explicitly constructed in \cite{Booker}.  This greatly reduces the memory
required when~$d$ is small. In this way, we no longer
rely on computations of integral points on the elliptic curve defined by
\eqref{eq:sdz} to rule out very small values of
$d$. Nevertheless, we note that one can improve the integral point search
carried out in \cite{Booker}, using a trick of Bremner \cite{Bremner} to
pass to a 3-isogenous curve. Using this approach we were able to
unconditionally rule out any solutions to \eqref{eq:threecubes} with
$d\le 100$ for the $k$ listed in \eqref{eq:openk}, and with $d\le 20,000$
assuming the GRH. It is thus now possible to certify under GRH
Bremner's heuristic search of the same region in 1995.
\end{remark}

\section{Heuristics}\label{sec:heuristics}
In this section we present a heuristic analysis of the distribution of
solutions to \eqref{eq:threecubes} for a fixed $k$.  We then use this to
optimize the choice of the ratio $R\coloneqq \zmax/\dmax$.

From \eqref{eq:realdensity} we see that on
$V=\{(x,y,z)\in\R^3:x^3+y^3+z^3=0,\,|x|\ge|y|\ge|z|\}$,
the proportion of the real density contributed by
points satisfying $y/z\in[t_1,t_2]$ is
\begin{equation}\label{eq:yzproportion}
4\sigma_\infty^{-1}\int_{t_1}^{t_2}\frac{dt}{(t^3+1)^{2/3}}.
\end{equation}
Given a large solution $(x,y,z)\in\Z^3$ to $x^3+y^3+z^3=k$, with
$|x|\ge|y|\ge|z|$, the
projective point $[x:y:z]\in\mathbb{P}^2(\R)$ lies close to the Fermat
curve $x^3+y^3+z^3=0$. We conjecture that for fixed $k$,
the ratios $y/z$ are distributed as above: the proportion of
points (ordered by any height function as in
\S\ref{sec:solutiondensity}) with $y/z\in[t_1,t_2]$ should converge to the quantity in
\eqref{eq:yzproportion}.

Let us assume that this is the case and work out the distribution of
$r\coloneqq -\frac{z}{x+y}$ for $(x,y,z)\in V$.
We have
$$
-\frac{y}{x}=\frac{2r^3+1-\sqrt{12r^3-3}}{2(r^3-1)}
\quad\text{and}\quad
-\frac{z}{x}=\frac{r(\sqrt{12r^3-3}-3)}{2(r^3-1)},
$$
so that
$$
t:=\frac{y}{z}=\frac{\sqrt{12r^3-3}-3}{6r}
\quad\text{and}\quad
(t^3+1)^{-2/3}\frac{dt}{dr}=\sqrt{\frac{3}{4r^3-1}}.
$$
Hence, for any $R\ge\alpha^{-1}$ we have
$$
\Pr[r\le R]=1-\Pr[r>R]=1-4\sigma_\infty^{-1}\int_R^\infty
\sqrt{\frac{3}{4r^3-1}}\,dr
=1-cK(R),
$$
where $c=4\sqrt3\sigma_\infty^{-1}
=6\sqrt3\frac{\Gamma(2/3)}{\Gamma(1/3)^2}
=1.96084321968938583\ldots$
and
$$
K(R)\coloneqq \int_R^\infty\frac{dr}{\sqrt{4r^3-1}}
=R^{-1/2}\sum_{j=0}^\infty
\frac{\binom{j-\frac12}{j}}{1+6j}(4R^3)^{-j}.
$$

Thus, the values of $1-cK(r)$ should be uniformly distributed on $[0,1]$.
To test this hypothesis, we plotted the cumulative distribution
of $1-cK(-z/(x+y))$ over the points of the Huisman data set with
$10^{7.5}<|x|\le 10^{15}$ versus that of a uniform random variable; see
Figure~\ref{fig:uniform}.

\begin{figure}
\begin{tikzpicture}
\selectcolormodel{gray}
\begin{axis}[
width=6in,
height=4in,
enlargelimits=false,
]
\addplot+[only marks, scatter, mark=x, mark size=0.01cm] table{zdplot.dat};
\addplot+[mark=x] coordinates {
	(0,0)
	(1,1)
};
\end{axis}
\end{tikzpicture}
\caption{Cumulative distribution of $1-cK(-z/(x+y))$ over solutions $(x,y,z)$
in the Huisman data set with $\max\{|x|,|y|,|z|\}\in [10^{7.5},10^{15}]$,
versus a uniform random variable.}\label{fig:uniform}
\end{figure}
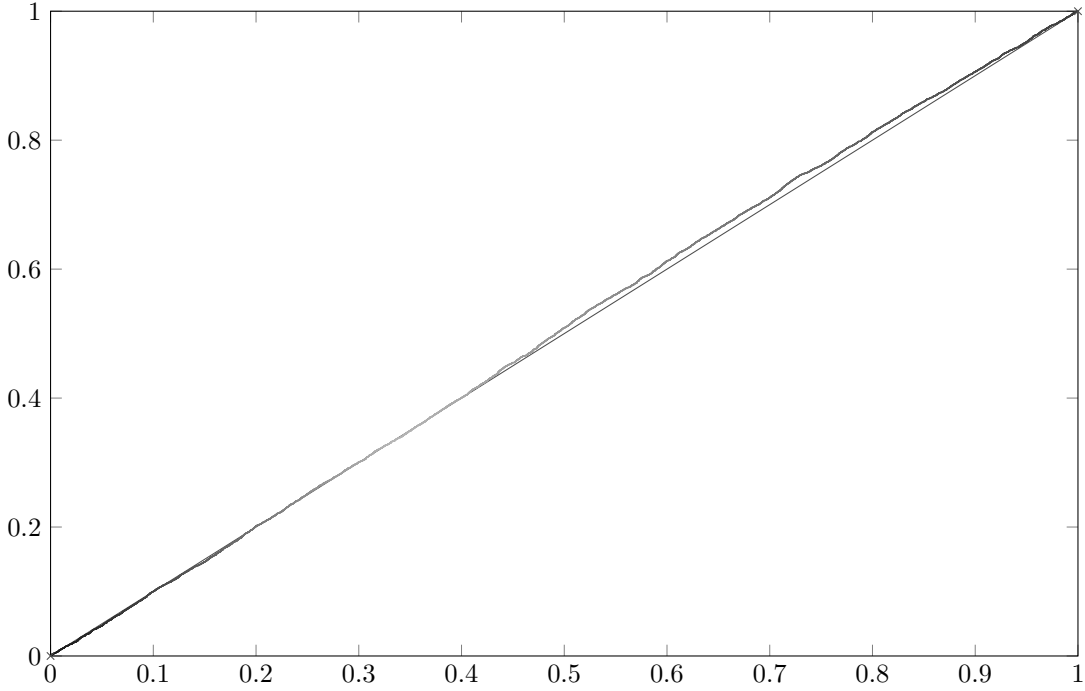

\begin{example}
For our solution to $x^3+y^3+z^3=3$ we have
$$
r\approx 4.36\times10^6
\quad\text{and}\quad
cK(r)\approx 9.39\times10^{-4},
$$
so this solution was an approximately 1-in-1000 event. This is also
reflected by the fact that the solution is highly skewed, with $|x|$ and
$|y|$ both much larger than $|z|$.
\end{example}

We use this analysis to optimize the choice of $R=\zmax/\dmax$ as follows.
We assume that a given divisor $d\in\Z_{>0}$ occurs with probability
$\kappa_d/d$, where $\kappa_d$ is an arithmetic factor
(depending on $k$) encoding the local solubility, in such a way that
$$
\sum_{d\le x}\kappa_d=\rho x+O(x/\log^2{x}),
\quad\text{for some constant }\rho>0.
$$
By partial summation it follows that there exists $C$ such that
$$
\sum_{d\le x}\frac{\kappa_d}{d}=\rho\log{x}+C+o(1)
\quad\text{and}\quad
\sum_{d\le x}\kappa_d f(d)
=(\rho+o(1))\int_0^x f(u)\,du
\quad\text{as }x\to\infty,
$$
for any monotonically decreasing function $f$ satisfying
$f(u)\asymp u^{-s}$ for some $s\in(0,1)$.
In turn, we expect to find $z$ in a fixed arithmetic progression modulo $d\le\dmax$ with
probability
$$
\Pr[|z|\le\zmax\mid d\text{ fixed}]=\Pr[r\le\zmax/d]=1-cK(\zmax/d).
$$
Hence, the number of
solutions that we expect to find is
\begin{align*}
\sum_{d\le\dmax}\frac{\kappa_d}{d}\left(1-cK\!\left(\frac{\zmax}{d}\right)\right)
&=\rho\log\dmax+C+o(1)
-(\rho+o(1))c\int_0^{\dmax}K\!\left(\frac{\zmax}{u}\right)\frac{du}{u}\\
&=\rho\log\dmax+C-\rho c\int_{\zmax/\dmax}^\infty K(r)\frac{dr}{r}+o(1).
\end{align*}
Taking $\dmax=\alpha\zmax$ recovers Heath-Brown's conjecture, provided that
$\rho=\rhos$.

Next, suppose that the total running time is $T(\dmax,\zmax)$, and let
$T_d$ and $T_z$ denote its partial derivatives.
Let $\dmax$ be defined implicitly in terms of $R=\zmax/\dmax$ so that $(\dmax,\zmax)$ remains on a level set for $T$, meaning that
$$
T(\dmax,R\dmax)=\text{constant}.
$$
Differentiating with respect to $R$, we have
$$
T_d(\dmax,R\dmax)\frac{\partial\dmax}{\partial R}
+T_z(\dmax,R\dmax)\left(\dmax+R\frac{\partial\dmax}{\partial R}\right)=0.
$$
We seek to maximize the expected solution count, which to leading order
is
$$
\rho\log\dmax+C-\rho c\int_R^\infty K(r)\frac{dr}{r}.
$$
Differentiating with respect to $R$, this gives
$$
\frac{\rho}{\dmax}\frac{\partial\dmax}{\partial R}
+\frac{\rho cK(R)}{R}=0,
$$
so that $\frac{\partial\dmax}{\partial R}=-c\dmax K(R)/R$.
Substituting this into the above, we obtain
$$
\frac{T_d(\dmax,R\dmax)}{T_z(\dmax,R\dmax)}
=R\left(\frac1{cK(R)}-1\right)
\approx
c^{-1}R^{3/2}\left(1-\frac1{56R^3}\right)-R =: C_R.
$$

In Table~\ref{table:tdtz} we show computed $T_d/T_z$ ratios for $k=3$ and various values of $R$ and $\dmax$.
For a given $\dmax$ we wish to choose~$R$ so that $T_d/T_z\approx C_R$.  It is difficult to measure $T_d/T_z$ precisely; it is the ratio of two small numbers, and this ratio is easily influenced by small differences in timings when running computations on different hardware.  To compute the values below we used a single hardware platform and took medians of five runs to compute each row.

From Table~\ref{table:tdtz} we can see that for $k=3$ and $\dmax \ge 2^{35}$ the optimal choice of $R$ is greater than 32, and for $\dmax \ge 2^{50}$ it is greater than 64.  
For other values of $k$ the pattern is similar although the $T_d/T_z$ vary slightly; this is to be expected given the varying benefit of cubic reciprocity constraints.

\begin{table}[tbh!]
\begin{tabular}{rrrrccr}
$R$ & $\dmax$ & $\zmax$ & $T_d$ & $T_z$ & $T_d/T_z$ & $C_R$\\\toprule
 $32$ & $2^{35}$ & $2^{40}$ & $2.804\times 10^{-08}$ & $2.359\times 10^{-10}$ & $118.9$ &  $60.3$\\
 $32$ & $2^{40}$ & $2^{45}$ & $2.738\times 10^{-08}$ & $2.247\times 10^{-10}$ & $121.8$ &  $60.3$\\
 $32$ & $2^{45}$ & $2^{50}$ & $2.922\times 10^{-08}$ & $2.175\times 10^{-10}$ & $134.4$ &  $60.3$\\
 $32$ & $2^{50}$ & $2^{55}$ & $3.113\times 10^{-08}$ & $2.150\times 10^{-10}$ & $144.8$ &  $60.3$\\
 $32$ & $2^{55}$ & $2^{60}$ & $3.678\times 10^{-08}$ & $2.100\times 10^{-10}$ & $175.2$ &  $60.3$\\
 $64$ & $2^{35}$ & $2^{41}$ & $3.140\times 10^{-08}$ & $1.813\times 10^{-10}$ & $173.2$ & $197.1$\\
 $64$ & $2^{40}$ & $2^{46}$ & $2.771\times 10^{-08}$ & $1.730\times 10^{-10}$ & $160.2$ & $197.1$\\
 $64$ & $2^{45}$ & $2^{51}$ & $3.112\times 10^{-08}$ & $1.613\times 10^{-10}$ & $192.9$ & $197.1$\\
 $64$ & $2^{50}$ & $2^{56}$ & $3.187\times 10^{-08}$ & $1.506\times 10^{-10}$ & $211.6$ & $197.1$\\
 $64$ & $2^{55}$ & $2^{61}$ & $3.862\times 10^{-08}$ & $1.612\times 10^{-10}$ & $239.6$ & $197.1$\\
$128$ & $2^{35}$ & $2^{42}$ & $3.749\times 10^{-08}$ & $1.238\times 10^{-10}$ & $302.8$ & $618.5$\\
$128$ & $2^{40}$ & $2^{47}$ & $3.407\times 10^{-08}$ & $1.216\times 10^{-10}$ & $280.2$ & $618.5$\\
$128$ & $2^{45}$ & $2^{52}$ & $3.826\times 10^{-08}$ & $1.530\times 10^{-10}$ & $250.1$ & $618.5$\\
$128$ & $2^{50}$ & $2^{57}$ & $3.768\times 10^{-08}$ & $1.185\times 10^{-10}$ & $318.0$ & $618.5$\\
$128$ & $2^{55}$ & $2^{62}$ & $4.096\times 10^{-08}$ & $1.091\times 10^{-10}$ & $375.4$ & $618.5$\\\bottomrule
\end{tabular}
\bigskip

\caption{$T_d/T_z$ vs $C_R$ for various values of $\dmax$ and $R=\zmax/\dmax$ for $k=3$.}\label{table:tdtz}
\end{table}

\section{Computational results}\label{sec:computation}
\subsection{Implementation}
We implemented the algorithm described in Section~\ref{sec:algorithm}
using the \texttt{gcc} C compiler \cite{GCC} and the \texttt{primesieve}
library for fast prime enumeration \cite{primesieve}.  We parallelized
by partitioning the set of primes $p\le \dmax$ into sub-intervals
$[\pmin,\pmax]$ of suitable size, with the work distributed across jobs
that checked all the $(d,z)$ candidates with the largest prime
factor $p_1\mid d$ lying in the assigned interval.
Each job was run
on a separate machine, with local parallelism achieved by distributing
the~$p_1$ across available cores (and for small values of $p_1$ also
distributing the $p_2$), as noted in Remarks~\ref{rem:parallel}.
When choosing the number of jobs and the sizes of the intervals $[\pmin,\pmax]$
we use the $\rhoz$ density estimates derived in Section~\ref{sec:solutiondensity},
as noted in Remark~\ref{rem:rhoap}.

We used a standard Tonelli--Shanks approach to computing cube roots
modulo primes; this involves computing a discrete logarithm in the
$3$-Sylow subgroup of $(\Z/p\Z)^\times$, using $O(1)$ group operations on
average, and $O(1)$ exponentiations. Hensel lifting was used to compute
cube roots modulo prime powers; these were precomputed and cached
for all prime powers up to $\min\{\pmax,\sqrt{\dmax}\}$.
For the the values of $\dmax$ that
we used, this precomputation typically takes just a few seconds and the
cache size is well under one gigabyte.  We use Montgomery representation
\cite{Montgomery} for performing arithmetic in $(\Z/p^r\Z)^\times$, but
switch to standard integer representation and use Barrett reduction
\cite{Barrett} during CRT enumeration of cube roots of $k$ modulo $d$,
and when sieving arithmetic progressions via auxiliary primes.

For the $k$ of interest, the sets $\AR_d(q)$ giving constraints modulo the integer~$q$ defined in
Lemma~\ref{lem:admissibility} for admissible pairs $(d,z)$ were
precomputed and cached; again this takes only a few seconds for the largest
values of $k$. In order to avoid using arithmetic progressions of
modulus larger than $\zmax$ we project these constraints to residue
classes modulo a suitably chosen divisor of $q$ when $qd>\zmax$.

\subsection{Computations}
In September 2019 we ran computations for the eleven unresolved $k\le
1000$ listed in \eqref{eq:openk} on Charity Engine's crowd-sourced
compute grid consisting of approximately 500,000 personal computers.
For this initial search we used $\zmax=10^{17}$ and $\dmax=\alpha\zmax$
to search for all solutions to \eqref{eq:threecubes} with $\min\{|x|,|y|,|z|\}\le 10^{17}$.  This search yielded the solutions for $k=42$, $k=165$, and $k=906$ listed in the introduction.
We then ran a search for $k=3$ using $\zmax=10^{18}$ and $\dmax=\alpha\zmax/9$ and found the solution for $k=3$ listed in the introduction.
These computations involved a total of several hundred core-years but were completed in just a few weeks (it is difficult to give more precise estimates of the computational costs due to variations in processor speeds and resource availability in a crowd-sourced computation).
Subsequently, over the course of 2020, Charity Engine conducted a search
at lower priority for the remaining eight candidate values of $k$,
with $\zmax=10^{19}$ and $\dmax=\zmax/54$;
this yielded the solution for $k=579$ in January 2021.

\begin{remark}
While in principle these searches rule out the existence of any solutions that were not found, we are reluctant to make any unconditional claims.  Despite putting in place measures to detect failures, including counting the primes that were enumerated (these counts can be efficiently verified after the fact), there is always the possibility of undetected hardware or software errors, especially on a large network of personal computers that typically do not have error correcting memory.
\end{remark}

In order to verify the minimality of the solution we found for $k=3$, we ran a separate verification with $\zmax$ equal to $472715493453327032$, the absolute value of the $z$ in our solution,  and $\dmax=\alpha\zmax$. This search was run on Google's Compute Engine \cite{GCE} and found no solutions other than those already known.
These computations were run on 8-core (16-vCPU) instances equipped with Intel Xeon processors in the Sandybridge, Haswell, and Broadwell families running at 2.0GHz or 2.2GHz.
Using 155,579 nodes the computation took less than 4 hours and used approximately 120 core-years.  We detected errors in 5 of the 155,579 runs which were corrected upon re-running the computations.  Barring the existence of any undetected errors, these computations rule out any smaller solutions for $k=3$ other than those we now know.

To assess the benefit of the theoretical and algorithmic improvements
introduced here, we searched for solutions to $k=33$ using $R=64$, which
is close to the optimal choice for $\dmax$ in the range
$[2^{40},2^{50}]$. The general search strategy we envision is to start
with a value of $\dmax$ for which all solutions with $|z|\le R\dmax$ are
known, where $R$ is chosen optimally for $\dmax$.  One would then
successively double $\dmax$, adjusting $R$ as necessary, and run a
search using $\zmax=R\dmax$.  If one takes care to avoid checking the
same admissible $(d,z)$ twice, the total time is approximately equal to
a single complete search using the final values of $\dmax$ and $R$ (one
expects $R$ to be increasing).  The first $\dmax=2^n$ sufficient to find
a solution for $k=33$ with this strategy is $\dmax=2^{47}$, for which we
choose $R=64$, yielding $\zmax=2^{53}$.  Using 2.8GHz Intel processors
in the Skylake family, this search finds the known solution for $k=33$
in 107 core-days.  The search in \cite{Booker} using $\zmax=10^{16}$ and
$\dmax=\alpha\zmax$ took 3145 core-days running mostly on 2.6GHz Intel processors in the Sandybridge family.  After adjusting for the difference in processor speeds and $\zmax$ values, our new approach finds the first solution for $k=33$ approximately 25 times faster.

In the future we hope to use this strategy to search for solutions for
the seven $k\le 1000$ that remain unresolved:
\begin{equation}
114,\ 390,\ 627,\ 633,\ 732,\ 921,\ 975.
\end{equation}

An implementation of our algorithm is available at
\begin{center}
\url{https://github.com/AndrewVSutherland/SumsOfThreeCubes}.
\end{center}

\bibliographystyle{amsalpha}
\providecommand{\href}[2]{#2}

\end{document}